\theoremstyle{plain}
\newtheorem{thm}{Theorem}[section]
\newtheorem{prop}[thm]{Proposition}
\theoremstyle{definition}
\newcommand{\R}{\mathbb{R}}
\newcommand{\N}{\mathbb{N}}
\newcommand{\Z}{\mathbb{Z}}
\newcommand{\C}{\mathbb{C}}
\newcommand{\Ma}{\mathbb{M}}
\newcommand{\y}{\gamma}
\newcommand{\e}{\varepsilon}
\newcommand{\Ka}{\overline{K}^\text{alg}_1}
\newcommand{\minusa}{\mathrlap{\!\not{\phantom{\mathrm{a}}}}\mathrm{a}}
\title{Uniqueness of the $\zeta$ Transformation in Operator K-Theory}
\author{Mikkel Munkholm}
\begin{document}
\frenchspacing
    
\begin{abstract}
The classification of $^\ast$-homomorphisms between simple nuclear C$^\ast$-algebras led to the discovery of a new sequence of natural transformations $\zeta^n \colon K_0(\,\cdot\,; \mathbb Z/n\Z) \rightarrow \Ka$, with $n \geq 2$, between the operator $K_0$-group with coefficients in $\mathbb \Z/n\Z$ and the Hausdorffized unitary algebraic $K_1$-group.  In this paper, uniqueness of $\zeta^n$ up to compatibility with certain natural transformations $K_0(\,\cdot\,; \mathbb \Z/n\Z) \rightarrow K_1$ and $\Ka \rightarrow K_1$ is established.
\end{abstract}

\maketitle

\section{Introduction}
Classification problems have been a prevailing theme in operator algebras since the beginning of the subject. In the framework of C$^\ast$-algebras, Elliott's conjecture on the classification of simple nuclear C$^\ast$-algebras via $K$-theoretic and tracial data, cf. \cite{Elliott:ICM}, has been a major endeavor in the field over the last few decades. The collective effort of the C$^\ast$-algebra community culminated in the classification of unital separable simple nuclear C$^\ast$-algebras tensorially absorbing the Jiang--Su algebra $\mathcal{Z}$ from \cite{JiangSu} and which satisfy the Universal Coefficient Theorem of \cite{UCT}; see e.g. \cite{Kirchberg, Phillips, GLN1, GLN2, EGLN, TWW, CETWW} for an inexhaustive list. The final statement is recorded as Corollary~D in \cite{CETWW}. Further, Section~$1.2$ in \cite{Class} is recommended for a more detailed historical overview.

However, for applications, one often seeks to not only capture the isomorphism type of the C$^\ast$-algebras, but moreover the structure of the morphisms as well. For two C$^\ast$-algebras $A$ and $B$ in the aforementioned class, a complete description of the approximate unitary equivalence classes of the unital embeddings $A\rightarrow B$ was independently obtained in \cite{Class} and \cite{GLN:zeta}. More precisely, unital embeddings $\varphi, \psi \colon A \rightarrow B$ are approximately unitarily equivalent if and only if their $K$-theory $K_*$, $K_\ast(\, \cdot \, ; \Z/n\Z)$, $n \geq 2$, Hausdorffized unitary algebraic $K_1$-groups $\Ka$ and trace simplices agree.

The range of the invariant is more subtle. Computing the range amounts to identifying when a morphism on the invariant lifts to a $^\ast$-homomorphism between the involved C$^\ast$-algebras $A$ and $B$.  There are obstructions to lifting given by certain natural transformations connecting the components of the invariant.  In the traceless setting such as in \cite{Kirchberg, Phillips}, the morphisms between $K_*$ and $K_*(\,\cdot\,;\mathbb Z/n\Z)$ which admit lifts are precisely those compatible with the  \emph{Bockstein operatations}, see \cite{Bodig1,Bodig2}.

In addition to managing Bockstein operations, complications emerge whenever tracial data is present. In \cite{Thomsen2}, motivated by \cite{Det}, natural transformations
\begin{equation}\label{introSeq}
\xymatrix
{
	K_0 \ar[r]^-{\rho} & \mathrm{Aff}\,T \ar[r]^-{\mathrm{Th}}  & \Ka  \ar[r]^-{\minusa} & K_1
}
\end{equation}

\noindent are developed. We revisit this sequence in Section~\ref{Traces}. However, there are still morphisms on the level of the classifying invariant for which these natural transformations, alongside the Bockstein operations, are compatible and yet do not arise from $^\ast$-homomorphisms of the C$^\ast$-algebras. The missing ingredient is another collection of natural transformations $\zeta^n \colon K_0(\, \cdot \, ; \Z/n\Z) \rightarrow \Ka$ for $n\geq 2$ that binds the tracial data encoded in \eqref{introSeq} to $K_0(\, \cdot \, ; \Z/n\Z)$.  Compatibility with this sequence of natural transformations and the previously mentioned ones is sufficient to identify which morphisms of the invariant lift to $^\ast$-homomorphisms between classifiable C$^\ast$-algebras; the final statement is found in Theorem B in \cite{Class}.

These natural transformations $\zeta^n$ were introduced, independently, in \cite{GLN:zeta} and \cite{Class}. Their constructions are of very different natures. The one in \cite{GLN:zeta} was obtained abstractly via the existence of a (unnatural) splitting of the quotient map $\minusa_A \colon \Ka(A) \rightarrow K_1(A)$ for each $C^*$-algebra $A$, while the one of \cite{Class} was explicitly built using the de la Harpe--Skandalis determinant of \cite{Det}. We provide a recap on the construction in Section~\ref{Traces}.  A hands-on computation in the forthcoming paper \cite{Class2} will prove that these two natural transformations coincide. We demonstrate an alternative proof of this and, moreover, establish an abstract characterization of $\zeta^n$, yielding a uniqueness result for $\zeta^n$. In the following, $\nu^n_0 \colon K_0(\, \cdot \, ; \Z/n\Z) \rightarrow K_1$ denotes one of the aforementioned Bockstein operations.

\begin{thm}\label{MainThm}
For each $n\geq 2$, there exists a unique natural transformation $\zeta^n \colon K_0(\, \cdot \, ; \Z/n\Z) \rightarrow \Ka$ such that $\minusa \circ \zeta^n = \nu^n_0$.
\end{thm}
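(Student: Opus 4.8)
The plan is to reduce the statement to a single, transparent, group‑theoretic obstruction. Existence of $\zeta^n$ is supplied by the construction recalled in Section~\ref{Traces} (following \cite{Class}), so only uniqueness needs argument. Suppose $\zeta^n$ and $\tilde\zeta^{n}$ are natural transformations with $\minusa\circ\zeta^n=\nu^n_0=\minusa\circ\tilde\zeta^{n}$, and put $\delta:=\zeta^n-\tilde\zeta^{n}$; this is a natural transformation $K_0(\,\cdot\,;\Z/n\Z)\to\Ka$ with $\minusa\circ\delta=0$. By the analysis of the sequence~\eqref{introSeq} recapped in Section~\ref{Traces} — in particular, $\ker(\minusa_A)$ is the image of $\mathrm{Th}_A$ and the kernel of $\mathrm{Th}_A$ is $\overline{\rho(K_0(A))}$ — the transformation $\delta$ factors as $\delta=\mathrm{Th}\circ\bar\delta$ for a natural transformation $\bar\delta\colon K_0(\,\cdot\,;\Z/n\Z)\to\mathrm{Aff}\,T(\,\cdot\,)/\overline{\rho(K_0(\,\cdot\,))}$, and it remains to prove $\bar\delta=0$. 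One should note that $\mathrm{Nat}\big(K_0(\,\cdot\,;\Z/n\Z),\mathrm{Aff}\,T\big)=0$ for trivial reasons: $K_0(A;\Z/n\Z)$ is a torsion group, annihilated by $n^2$, whereas $\mathrm{Aff}\,T(A)$ is torsion‑free. The real content of the theorem is that this vanishing survives after passing to the quotient $\mathrm{Aff}\,T/\overline{\rho(K_0)}$.

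The engine of the argument is to apply this torsion‑versus‑torsion‑free dichotomy one C$^{\ast}$‑algebra at a time. If $B$ is a C$^{\ast}$‑algebra with $K_0(B)$ a torsion group, then $\rho_B\colon K_0(B)\to\mathrm{Aff}\,T(B)$ vanishes, so $\mathrm{Aff}\,T(B)/\overline{\rho(K_0(B))}=\mathrm{Aff}\,T(B)$ is torsion‑free, whence $\bar\delta_B=0$ and $\delta_B=0$. Using naturality and the Morita invariance of all the functors in sight, it then follows that $\delta_A(y)=0$ for every $A$ and every $y\in K_0(A;\Z/n\Z)$ lying in the image of $K_0(\varphi;\Z/n\Z)$ for some $^{\ast}$‑homomorphism $\varphi\colon B\to A\otimes\Ki$ with $K_0(B)$ torsion. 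The theorem is therefore reduced to the following \emph{generation statement}: for every $A$, the group $K_0(A;\Z/n\Z)$ is generated by such images.

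To prove the generation statement I would first pass to separable $A$ — the functors involved are continuous and every class of $K_0(A;\Z/n\Z)$ is supported on a separable subalgebra — and fix a mod‑$n$ dimension‑drop (Moore) algebra $\Mi_n$, for instance the mapping cone of the unital inclusion $\C\hookrightarrow M_n$, so that $K_0(\Mi_n)=0$, $K_1(\Mi_n)\cong\Z/n\Z$ and $K_0(\Mi_n;\Z/n\Z)\cong\Z/n\Z$; let $\theta$ be a generator of the last group. The torsion part $K_1(A)[n]$ of $K_0(A;\Z/n\Z)$ is covered: a class in $K_1(A)$ of order dividing $n$ — that is, a unitary $u$ over $A\otimes\Ki$ together with a path connecting $u^{\oplus n}$ to the identity — is induced by a $^{\ast}$‑homomorphism $\varphi\colon\Mi_n\to A\otimes\Ki$, and then $\nu^n_0\big(K_0(\varphi;\Z/n\Z)(\theta)\big)$ is the given class. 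The subgroup $K_0(A)/n$ is covered too: the mapping‑cone projection $g\colon\Mi_n\to\C$ induces an \emph{isomorphism} $K_0(g;\Z/n\Z)\colon K_0(\Mi_n;\Z/n\Z)\xrightarrow{\ \cong\ }K_0(\C;\Z/n\Z)=\Z/n\Z$, because the integral connecting map $K_0(\C)\to K_1(SM_n)$ of the defining extension $0\to SM_n\to\Mi_n\xrightarrow{\,g\,}\C\to0$ is multiplication by $n$, whose mod‑$n$ reduction is zero; hence, for any projection $p$ in $A\otimes\Ki$, writing $\varphi_p\colon\C\to A\otimes\Ki$ for the map sending $1$ to $p$, one has $\rho_{\mathrm{mod}\,n}([p])=K_0(\varphi_p\circ g;\Z/n\Z)(\theta)$, and as $p$ varies these classes generate $K_0(A)/n$. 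In both cases the source $\Mi_n$ has $K_0(\Mi_n)=0$, hence torsion, so the generation statement follows.

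The hard part will be the generation statement itself — concretely, realising the relevant mod‑$n$ $K$‑theory classes by \emph{genuine} $^{\ast}$‑homomorphisms out of $\Mi_n$, and not merely by $KK$‑classes. Since $\Ka$ is neither a homotopy functor nor a $KK$‑functor, one cannot invoke $KK$‑representability or the classification of $^{\ast}$‑homomorphisms; the dictionary between mod‑$n$ $K$‑theory classes and $^{\ast}$‑homomorphisms from dimension‑drop algebras has to be set up concretely, with the usual care required by Moore‑type algebras, and naturality over the whole category verified directly. Once that is in place, the theorem follows immediately from the dichotomy of the second paragraph.
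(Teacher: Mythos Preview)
Your overall strategy---reduce to $\delta$ with $\minusa\circ\delta=0$, then kill $\delta$ on C$^*$-algebras $B$ with torsion $K_0(B)$ via the torsion/torsion-free dichotomy, then propagate by naturality---is exactly the paper's. But there is a genuine gap in the propagation step, and the paper's execution is much shorter than your ``generation statement'' route.

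\textbf{The gap.} $\Ka$ is \emph{not} Morita invariant in the sense you need. If $A$ has a tracial state, $A\otimes\Ki$ has no nonzero bounded tracial functional (any positive tracial $\tau$ would satisfy $\tau(a\otimes\sum_{i\leq N}e_{ii})=N\tau(a\otimes e_{11})$, forcing $\tau=0$), so $\mathrm{Aff}_0\,T_{\leq 1}(A\otimes\Ki)=0$ and the Thomsen sequence gives $\Ka(A\otimes\Ki)\cong K_1(A)$. Hence $\ker\minusa_{A\otimes\Ki}=0$, the conclusion $\delta_{A\otimes\Ki}=0$ is automatic, and the stabilization map $\Ka(A)\to\Ka(A\otimes\Ki)$ kills precisely the $\mathrm{Aff}\,T$-contribution you are trying to control. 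So producing maps $\Mi_n\to A\otimes\Ki$ tells you nothing about $\delta_A$. This is repairable by landing in $M_k(A)$ for finite $k$ (where $\Ka$ \emph{is} matrix-stable by construction), but then you must track matrix sizes through your hand-built realizations, and your ``hard part'' gets harder.

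\textbf{The paper's shortcut.} Rather than generate $K_0(A;\Z/n\Z)$ from a fixed small source $\Mi_n$, take the single large source $B=A\otimes\mathbb I_n$. By \cref{Torsion}, $K_0(A\otimes\mathbb I_n)$ is $n$-torsion, so your dichotomy gives $\delta_{A\otimes\mathbb I_n}=0$ immediately. Naturality with respect to the evaluation $\e_A^n\colon A\otimes\mathbb I_n\to A$ then yields
\[
\delta_A\circ K_0(\e_A^n;\Z/n\Z)\;=\;\Ka(\e_A^n)\circ\delta_{A\otimes\mathbb I_n}\;=\;0,
\]
and \cref{Surjective} says $K_0(\e_A^n;\Z/n\Z)$ is surjective, whence $\delta_A=0$. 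This \emph{is} your generation statement, but realized by one canonical $^*$-homomorphism per $A$ rather than a family of maps out of $\Mi_n$; no realization of $KK$-classes by genuine $^*$-homomorphisms, no separability reduction, and no stabilization are needed. (Incidentally, $K_0(\,\cdot\,;\Z/n\Z)$ is annihilated by $n$, not merely $n^2$---this is \cref{Torsion}---though it makes no difference to your argument.)
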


Despite providing an explicit formula for $\zeta$ in \cite{Class}, this approach carries the disadvantage of relying on a particular model for $K_\ast(\, \cdot \, ; \Z/n\Z)$. A strength of $K_\ast(\, \cdot \, ; \Z/n\Z)$ is its flexibility; $K_\ast(\, \cdot \, ; \Z/n\Z) \simeq K_\ast(\, \cdot \,  \otimes D_n)$ for any C$^\ast$-algebra $D_n$ satisfying the UCT and with $K_\ast(D_n) \simeq (0, \Z/n\Z)$, see \cite[Theorem~6.4]{Schochet}. Every model of $K_\ast(\, \cdot \, ; \Z/n\Z)$ has an attached family of Bockstein operations with an attached uniqueness property for the Bockstein operations, see e.g. \cite[Appendix~A]{Class}, allowing one to transition between models and maintain the compatibility with the inherent Bockstein operations. In the same spirit, the theorem above ensures that compatibility with $\zeta^n$ is independent of which model of $K(\,\cdot\,;\mathbb Z/n\mathbb Z)$ is used. 

In Sections~\ref{TotalK} and \ref{Traces}, we develop the apparatus entering in building the natural transformation $\zeta^n$, including $K$-theory with $\Z/n\Z$-coefficients and one of the Bockstein operations, tracial invariants and the sequence (\ref{introSeq}). During this, we describe how the sequence (\ref{introSeq}) may be adjusted to accommodate the non-unital setting. The final section is devoted to the proof of our abstract characterization of $\zeta^n$.

\subsection*{Acknowledgments} I am indebted to Christopher Schafhauser for his help and advise. Furthermore, I am grateful to the authors of \cite{Class} for sharing their preprint of their follow-up paper and permitting me to include observations on the non-unital Thomsen extension. Finally, this work was partially supported by the NSF grant DMS-$2400178$.

\section{K-theory with $\Z/n\Z$-coefficients}\label{TotalK}
We briefly recall the definition of $K$-theory with coefficients and Bockstein operations.  To this end, we define the \textit{dimension drop algebra}
\[
\mathbb{I}_n \coloneqq \lbrace f \in C([0,1], \Ma_n) : f(0) \in \C 1_{\Ma_n}, \, f(1) = 0 \rbrace,
\]
where $\Ma_n \coloneqq M_n(\C)$. With this definition, \textit{$K$-theory with $\Z/n\Z$-coefficients} of a C$^\ast$-algebra $A$ is
\[
K_\ast(A;\Z/n\Z) \coloneqq K_{\ast - 1}(A \otimes \mathbb{I}_n),
\]

\noindent see also \cite[Theorem~6.4]{Schochet}. For the Bockstein operations, to each C$^\ast$-algebra $A$, let $SA \coloneqq C_0((0,1), A)$ denote its suspension and consider the extension
\begin{equation}\label{Dimdrop}
\xymatrix{
0 \ar[r]  & A \otimes S\Ma_n  \ar[r]^-{\text{id}_A \otimes \iota}  &  A \otimes \mathbb{I}_n \ar[r]^-{\text{id}_A \otimes \e_0^n}  & A \ar[r]  & 0.
}
\end{equation}
Here, $\e_0^n \colon \mathbb I_n \rightarrow \mathbb C$ denotes the evaluation map at $t=0$ while $\iota \colon S\Ma_n \hookrightarrow \mathbb{I}_n$ denotes the inclusion. Due to $\mathbb{I}_n$ being nuclear, this sequence is indeed short-exact. The six-term exact sequence in $K$-theory takes the form
\begin{equation*}\label{SixTerm}
\xymatrix{
K_0(A) \ar[r]^-{\mu_{0,A}^n}  & K_0(A;\Z/n\Z) \ar[r]^-{\nu_{0,A}^n}  & K_1(A) \ar[d]^-{\times n} \\
K_0(A) \ar[u]^-{\times n}  & K_1(A;\Z/n\Z) \ar[l]_-{\nu_{1,A}^n}  & K_1(A) \ar[l]_-{\mu_{1,A}^n}
}
\end{equation*}
Here $\nu_{i,A}^n \coloneqq K_{i+1}(\text{id}_A \otimes \e_0^n)$ for $i = 0,1$ (mod $2$). The horizontal morphisms are some of the so-called \textit{Bockstein operations}. The naturality of the six-term exact sequence in $K$-theory implies that the Bockstein operations constitute natural transformations. A special case occurs when choosing $A=\C$ in \eqref{Dimdrop}, wherein one obtains a commutative diagram
\begin{equation*}\label{DimDropK}
\xymatrix{
\Z \ar[r]^-{\mu_{0,\C}^n}  & K_0(\C;\Z/n\Z) \ar[r]^-{\nu_{0,\C}^n}  & 0 \ar[d]^-{\times n} \\
\Z \ar[u]^-{\times n}  & K_1(\C;\Z/n\Z) \ar[l]_-{\nu_{1,\C}^n}  & 0 \ar[l]_-{\mu_{1,\C}^n}
}
\end{equation*}

\noindent Recalling that $K_\ast(\mathbb{I}_n) = K_{\ast-1}(\C, \Z/n\Z)$, one may deduce that $K_0(\mathbb{I}_n) \simeq 0$ and $K_1(\mathbb{I}_n) \simeq \Z/n\Z$. There are additional Bockstein operations, 
\[ 
    K_*(\,\cdot\,;\mathbb{Z}/n\mathbb Z) \rightarrow K_*(\,\cdot\,;\mathbb{Z}/mn\mathbb{Z}) \rightarrow K_*(\,\cdot\,;\mathbb Z/m\mathbb Z)
\]
for $m, n \geq 2$. We omit these as they will not be relevant here. K-theory with $\Z/n\Z$-coefficients always admits $n$-torsion. This may be recovered from \cite{Schochet}. We provide a slightly different, albeit short, proof.

\begin{prop}\label{Torsion}
For every integer $n\geq 2$ and any $\mathrm{C}^\ast$-algebra $A$, the groups $K_0(A; \Z/n\Z)$ and $K_1(A; \Z/n\Z)$ have $n$-torsion. 
\end{prop}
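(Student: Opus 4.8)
The plan is to reduce the whole statement to the single claim that multiplication by $n$ is the zero endomorphism of $K_\ast(A\otimes\mathbb{I}_n)$. By definition and Bott periodicity, $K_0(A;\Z/n\Z)=K_{-1}(A\otimes\mathbb{I}_n)\cong K_1(A\otimes\mathbb{I}_n)$ and $K_1(A;\Z/n\Z)=K_0(A\otimes\mathbb{I}_n)$, so it is enough to prove that $nx=0$ for every $x\in K_\ast(A\otimes\mathbb{I}_n)$.

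The first step is to establish this for $\mathbb{I}_n$ itself, in the sharp form $n\cdot[\mathrm{id}_{\mathbb{I}_n}]=0$ in $KK(\mathbb{I}_n,\mathbb{I}_n)$. Being subhomogeneous, $\mathbb{I}_n$ is type I and hence lies in the bootstrap class, so $KK_0(\mathbb{I}_n,\mathbb{I}_n)$ is governed by the universal coefficient sequence. Invoking the computation $K_0(\mathbb{I}_n)=0$ and $K_1(\mathbb{I}_n)\simeq\Z/n\Z$ already recorded above, the $\mathrm{Ext}$-contribution to $KK_0$, namely $\mathrm{Ext}(K_0(\mathbb{I}_n),K_1(\mathbb{I}_n))\oplus\mathrm{Ext}(K_1(\mathbb{I}_n),K_0(\mathbb{I}_n))$, vanishes, while the $\mathrm{Hom}$-contribution is $\mathrm{Hom}(\Z/n\Z,\Z/n\Z)$; hence $KK_0(\mathbb{I}_n,\mathbb{I}_n)$ is annihilated by $n$, and in particular so is $[\mathrm{id}_{\mathbb{I}_n}]$.

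The second step tensors this up. Since $\mathbb{I}_n$ is nuclear, the exterior Kasparov product gives $n\cdot[\mathrm{id}_{A\otimes\mathbb{I}_n}]=[\mathrm{id}_A]\otimes_{\C}\bigl(n\cdot[\mathrm{id}_{\mathbb{I}_n}]\bigr)=0$ in $KK(A\otimes\mathbb{I}_n,A\otimes\mathbb{I}_n)$. Forming the Kasparov product of a class in $K_\ast(A\otimes\mathbb{I}_n)=KK_\ast(\C,A\otimes\mathbb{I}_n)$ with $n\cdot[\mathrm{id}_{A\otimes\mathbb{I}_n}]$ multiplies it by $n$; as this class vanishes, multiplication by $n$ is zero on $K_\ast(A\otimes\mathbb{I}_n)$, which is what we want. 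For non-separable $A$ one reduces to the separable case, since $K_\ast$ commutes with inductive limits and $A\otimes\mathbb{I}_n$ is the inductive limit of the $A_0\otimes\mathbb{I}_n$ with $A_0\subseteq A$ separable.

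I do not expect a genuine obstacle: the argument is short, and the one point to be attentive to is the degree bookkeeping in the second paragraph, where it is essential that the $\mathrm{Ext}$-term relevant for $KK_0$ vanishes — the group $\mathrm{Ext}(\Z/n\Z,\Z/n\Z)$ does occur, but in $KK_1(\mathbb{I}_n,\mathbb{I}_n)$, not $KK_0$ — so that $KK_0(\mathbb{I}_n,\mathbb{I}_n)$ is literally $\mathrm{Hom}(\Z/n\Z,\Z/n\Z)$ rather than an extension assembled from torsion groups. A variant avoiding $KK$-theory runs through the Künneth theorem of Schochet: as $\mathbb{I}_n$ is in the bootstrap class, the Künneth sequence presents $K_\ast(A\otimes\mathbb{I}_n)$ as an extension, splitting unnaturally, of a degree shift of $\mathrm{Tor}(K_\ast(A),K_\ast(\mathbb{I}_n))$ by $K_\ast(A)\otimes K_\ast(\mathbb{I}_n)$, and both of these are annihilated by $n$ because $K_\ast(\mathbb{I}_n)$ is.
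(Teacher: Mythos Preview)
Your argument is correct. The paper takes the K\"unneth route you sketch in your last paragraph: since $\mathbb{I}_n$ lies in the bootstrap class with $K_\ast(\mathbb{I}_n)=(0,\Z/n\Z)$, Schochet's K\"unneth sequence for $K_\ast(A\otimes\mathbb{I}_n)$ has both outer terms ($K_\ast(A)\otimes\Z/n\Z$ and $\mathrm{Tor}(K_{\ast+1}(A),\Z/n\Z)$) annihilated by $n$, and the unnatural splitting finishes it. Your primary route is genuinely different: you prove the sharper fact $n\cdot[\mathrm{id}_{\mathbb{I}_n}]=0$ in $KK_0(\mathbb{I}_n,\mathbb{I}_n)$ via the UCT (your degree bookkeeping is right --- the Ext contribution lands in $KK_1$), then propagate by exterior product. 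This buys more than the statement asks for: it shows $n$ kills not just $K_\ast(A\otimes\mathbb{I}_n)$ but any $KK$-theoretic invariant of $A\otimes\mathbb{I}_n$, and it makes transparent that the torsion comes from $\mathbb{I}_n$ itself rather than from an explicit decomposition. The cost is the separability reduction for the Kasparov product, which you handle correctly. The paper's K\"unneth argument is shorter and avoids that reduction, and the unnatural splitting it records is occasionally useful elsewhere; your $KK$ argument is more conceptual and more portable.
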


\begin{proof}
Due to (\ref{Dimdrop}) applied to the case $A=\C$, $\mathbb{I}_n$ is an extension of separable C$^\ast$-algebra satisfying the UCT, hence $\mathbb{I}_n$ satisfies the UCT by Proposition $2.4.7$ in \cite{RorClass}. Thus, the Künneth formula from \cite{UCT} applies.  We shall apply it in the form of Propositon $1.8$ in \cite{Schochet}. Since $K_0(\mathbb{I}_n)=0$ while $K_1(\mathbb{I}_n) = \Z/n\Z$, the Künneth sequence collapses to the $\Z/2\Z$-graded sequence
\[
\xymatrix{
0 \ar[r]  &  K_\ast(A) \otimes \Z/n\Z \ar[r]   &  K_{\ast}(A \otimes \mathbb{I}_n)  \ar[r]  & \mathrm{Tor}(K_{\ast+1}(A), \Z/n\Z) \ar[r] & 0.
}
\]
By Remark $7.11$ of \cite{UCT}, this sequence admits an unnatural splitting. Consequently,  $K_{\ast-1}(A;\Z/n\Z) \simeq (K_\ast(A)\otimes \Z/n\Z) \oplus \mathrm{Tor}_\Z^1(K_{\ast+1}(A),\Z/n\Z)$ unnaturally. The direct sum clearly has $n$-torsion, proving the claim.
\end{proof}

A second computation we shall rely on is surjectivity of the evaluation map in $K_\ast(\, \cdot \, ; \Z/n\Z)$. Observe that $K_\ast(\, \cdot \, ; \Z/n\Z)$ is functorial, since K-theory and the assignment of $A$ into (\ref{Dimdrop}) are both functorial. For notational convenience, we write $\e_A^n$ as a shorthand for the map $\text{id}_A \otimes \e_0^n$ from (\ref{Dimdrop}).

\begin{prop}\label{Surjective}
Let $n\geq 2$ be an integer and $A$ be some $\mathrm{C}^\ast$-algebra. Then the homomorphism $K_0(\e_A^n;\Z/n\Z) \colon  K_0(A \otimes \mathbb{I}_n;\Z/n\Z) \rightarrow K_0(A)$ is surjective.
\end{prop}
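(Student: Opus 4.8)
The plan is to produce a splitting of the evaluation map at the level of $K$-theory with coefficients, constructed from a $^\ast$-homomorphism $\C \to \mathbb I_n$-type gadget at the algebra level, so that surjectivity of $K_0(\e_A^n; \Z/n\Z)$ follows by functoriality. More precisely, $K_0(\e_A^n; \Z/n\Z) = K_1(\mathrm{id}_A \otimes \e_0^n \otimes \mathrm{id}_{\mathbb I_n})$ after identifying $K_0(\,\cdot\,;\Z/n\Z)$ with $K_1(\,\cdot\, \otimes \mathbb I_n)$, where $\e_0^n \colon \mathbb I_n \to \C$ is evaluation at $0$. So it suffices to split $\e_0^n$ on the relevant part of $K$-theory, or, failing an honest algebra splitting, to exhibit enough of one. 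The cleanest route: observe that $\e_0^n$ admits a unital $^\ast$-homomorphic right inverse up to what is visible in $K$-theory — indeed $\C \hookrightarrow \mathbb I_n$ via the constant path is not available (it fails $f(1)=0$), so instead I would argue directly with the six-term sequence.

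The key step is to unwind the six-term exact sequence for the extension (\ref{Dimdrop}) applied to $A \otimes \mathbb I_n$ in place of $A$, i.e. to the extension
\[
\xymatrix{
0 \ar[r] & A \otimes \mathbb I_n \otimes S\Ma_n \ar[r] & A \otimes \mathbb I_n \otimes \mathbb I_n \ar[r]^-{\mathrm{id} \otimes \e_0^n} & A \otimes \mathbb I_n \ar[r] & 0,
}
\]
whose connecting maps are the Bockstein operations $\mu$, $\nu$ for the algebra $A \otimes \mathbb I_n$. By \Cref{Torsion}, or rather its proof, $K_\ast(A \otimes \mathbb I_n) \simeq K_\ast(A;\Z/n\Z)$ decomposes (unnaturally) with a $\Z/n\Z$-tensor summand and a $\mathrm{Tor}$-summand, and in either summand multiplication by $n$ is zero. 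Hence in the six-term sequence the maps $\times n$ vanish, which forces the Bockstein map $\nu^n_{1, A\otimes \mathbb I_n}\colon K_1(A;\Z/n\Z) \to K_1(A\otimes \mathbb I_n)$ — whose codomain, after reindexing, is $K_0(A;\Z/n\Z)$ — to be surjective, and this map is exactly $K_0(\e_A^n;\Z/n\Z)$ up to the standard identifications. Working out that identification carefully (tracking the suspension shift $K_\ast(\,\cdot\,;\Z/n\Z) = K_{\ast-1}(\,\cdot\,\otimes\mathbb I_n)$ and which factor of $\mathbb I_n \otimes \mathbb I_n$ gets evaluated) is the crux.

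I expect the main obstacle to be bookkeeping rather than conceptual: one must be careful that the Bockstein operation appearing as a connecting homomorphism for the doubled extension really does agree with $K_0(\e_A^n;\Z/n\Z)$, and not with some other map differing by a sign or by a Bott map. A safe alternative, should the identification prove fiddly, is to avoid it entirely: factor $\e_A^n$ through $A \otimes \mathbb I_n \xrightarrow{\mathrm{id}\otimes\e_0^n} A$ and simply chase the original six-term sequence for (\ref{Dimdrop}) with coefficients — i.e. tensor (\ref{Dimdrop}) by $\mathbb I_n$ and apply the Künneth splitting of \Cref{Torsion} to each term — to see that $\mu^n$ and $\nu^n$ become split surjections/injections because the $\times n$ maps die. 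Either way, the single input that does the real work is that $\times n = 0$ on $K_\ast(\,\cdot\,;\Z/n\Z)$, which is precisely the content of \Cref{Torsion}.
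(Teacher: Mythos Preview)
Your approach is correct and rests on the same essential input as the paper's—\cref{Torsion} forces a connecting map in a six-term sequence to vanish—but the execution differs in how that connecting map is identified. You run the Bockstein six-term sequence for $B=A\otimes\mathbb I_n$, where the boundary maps are literally $\times n$ on $K_\ast(A\otimes\mathbb I_n)=K_{\ast-1}(A;\Z/n\Z)$ and hence vanish; this gives surjectivity of $\nu^n_{0,A\otimes\mathbb I_n}=K_1(\mathrm{id}_A\otimes\mathrm{id}_{\mathbb I_n}\otimes\e_0^n)$, and you then reach $K_0(\e_A^n;\Z/n\Z)=K_1(\mathrm{id}_A\otimes\e_0^n\otimes\mathrm{id}_{\mathbb I_n})$ by precomposing with the tensor-flip automorphism of $\mathbb I_n\otimes\mathbb I_n$. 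The paper instead stays with the six-term sequence that already contains $K_0(\e_A^n;\Z/n\Z)$ (so no flip is needed) and shows its exponential map $\delta_0$ factors through $\times n$ via naturality against the inclusion $\mathbb I_n\hookrightarrow C\Ma_n$ into the cone, where the parallel boundary map is governed by the map $A\to A\otimes\Ma_n$ inducing $\times n$. Your route trades the auxiliary cone extension for the flip identification; the paper's route makes the reverse trade. The bookkeeping you flag as ``the crux'' is resolved cleanly once you observe that the flip is a $^\ast$-isomorphism intertwining the two evaluations, so surjectivity transfers immediately—there is no sign or Bott ambiguity to worry about, since only surjectivity (not equality of maps) is at stake.
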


\begin{proof}
Let $C\Ma_n = C(0,1]\otimes \Ma_n$ denote the cone of $\Ma_n$ which contains $S\Ma_n$ as an ideal. The sequence (\ref{Dimdrop}) expands to the commutative diagram
\[
\xymatrix{
0 \ar[r]  & A \otimes S\Ma_n \ar[d]^-{\text{id}_{A}\otimes \mathrm{id}_{S\Ma_n}}  \ar[r]^-{\text{id}_A \otimes \iota}   &  A \otimes \mathbb{I}_n \ar[rr]^-{\e_A^n} \ar[d]^-{\text{id}_A \otimes \text{incl.}}  && A \ar[d]^-{\text{id}_A \otimes 1_{\Ma_n}} \ar[r]  & 0 \\
0 \ar[r]  & A \otimes S\Ma_n  \ar[r]  &  A \otimes C\Ma_n \ar[rr]_-{\text{id}_A \otimes \mathrm{ev}_0}  && A \otimes \Ma_n \ar[r]  & 0
}
\]
where $\iota$ is inclusion of $S\Ma_n$ into $\mathbb{I}_n$. By the six-term sequence and naturality of this sequence, we afford a commutative diagram
\[
\xymatrix{
K_0(A\otimes \mathbb{I}_n; \Z/n\Z) \ar[d]^-{K_0(\mathrm{id}_A \otimes \mathrm{incl.}; \Z/n\Z)}  \ar[rr]^-{K_0(\e_A^n; \Z/n\Z)}  && K_0(A;\Z/n\Z) \ar[d]^-{\times n} \ar[r]^-{\partial_0}  & K_1(A\otimes S\Ma_n) \ar[d]^-{\text{id}_{K_1(A\otimes S\Ma_n)}} \\
K_0(A\otimes C\Ma_n; \Z/n\Z) \ar[rr]  &&  K_0(A;\Z/n\Z) \ar[r]_-{\partial_0'}  & K_1(A\otimes S\Ma_n) 
}
\]
Here $\partial_0$ and $\partial_0'$ denote the attached exponential maps. Commutativity of the right-hand square in conjunction with \cref{Torsion} yields $\partial_0 = 0$. Due to exactness of the upper-row, this grants surjectivity of $K_0(\e_A^n; \Z/n\Z)$.
\end{proof}

\section{Traces and unitary algebraic $K_1$}\label{Traces}

This section develops the Thomsen sequence and thereby the target functor for the $\zeta$-transformation. We will extend the situation to the non-unital realm. Let $A$ and $B$ be some C$^\ast$-algebras, unital or not. A \textit{tracial functional} on $A$ will here refer to a linear functional $\tau \colon A \longrightarrow \C$ such that $\tau(ab) = \tau(ba)$ for each $a,b\in A$. The space of tracial states on $A$ is denoted by $T(A)$ while $T_{\leq 1}(A)$ comprises all contractive tracial functionals. By $\text{Aff }T(A)$ we denote the real vector space of continuous affine functions $f\colon T(A) \longrightarrow \R$. Note that $\text{Aff }T(\cdot)$ is functorial in the unital setting. That is, a unital $^\ast$-homomorphism $\varphi \colon A \longrightarrow B$ induces a positive unital linear map
\[
\text{Aff}\,T(\varphi) \colon \text{Aff}\,T(A) \longrightarrow \text{Aff}\, T(B), \, \text{Aff}\,T({\varphi})(f)(\tau) = f(\tau \circ \varphi).
\]
In this manner, the pairing between $K_0$ and $T(A)$ assumes the shape of the homomorphism
\[
\rho_A \colon K_0(A) \longrightarrow \text{Aff}\,T(A), \, \rho_A([p]_0 - [q]_0)(\tau) = \tau_n(p) - \tau_n(q),
\]
for projections $p,q\in \Ma_n \otimes A$, where $\tau_n$ is the unnormalized extension of $\tau$ and $\text{Aff}\, T(\cdot)$ is viewed as an abelian group.

The tracial data of $\text{Aff}\, T(A)$ and the attached pairing map $\rho_A$ as described admits a connection to the unitary algebraic $K_1$-group. Unitary algebraic $K_1$ in the unital setting traces back to \cite{Thomsen1} and \cite{Thomsen2}. We refer to Section $2.2$ in \cite{Class} for a more in depth account.

Let $A$ be a unital C$^\ast$-algebra and denote by $DU_n(A)$ the derived subgroup of the group $U_n(A)$ of unitaries in $\text{M}_n(A)$, i.e., the subgroup generated by all its commutators. The canonical unital inclusion of $\text{M}_n(A)$ into $\text{M}_{n+1}(A)$ preserves the commutators, hence induces an inductive limit $U_\infty(A)$ equipped with the inductive limit topology. With this in mind, the \textit{Hausdorffized unitary algebraic $K_1$-group} is then the quotient
$$
\Ka(A) := U_\infty(A)/\overline{DU_\infty(A)}.
$$

A class in $\Ka(A)$ is denoted by $[u]_\text{alg}$. The link to traces is rectified via the Thomsen sequence from \cite{Thomsen2}; see e.g. Proposition $2.9$(i) of \cite{Class} for a proof alongside its definition. It was successfully employed in classification of $A\mathbb{T}$ and $AH$-algebras, cf. \cite{Thomsen1}. The \textit{Thomsen sequence} is the sequence
\begin{equation}\label{ThomsenExt1}
\xymatrix{
K_0(A) \ar[r]^-{\rho_A}  & \text{Aff}\, T(A) \ar[r]^-{\text{Th}_A}  & \Ka(A) \ar[r]^-{\minusa_A} & K_1(A) \ar[r] & 0.
}
\end{equation}
Here $\minusa_A([u]_\text{alg}) = [u]_1$ and $\text{Th}(\mathrm{ev}_h) = [e^{2\pi i h}]$ for some self-adjoint $h\in A$ and with $\mathrm{ev}_h\in \text{Aff}\, T(A)$ being the evaluation, i.e. $\mathrm{ev}_h(\tau)=\tau(h)$; the choice of $h$ is irrelevant according to the discussion following the proof of Proposition $2.10$ in \cite{Class}. As part of the work in the same section of \cite{Class}, continuity of $\mathrm{Th}_A$ is established. The sequence is exact, except for the detail that 
\begin{equation}\label{NearExact}
\ker \mathrm{Th}_A = \overline{\mathrm{im }\rho_A}.
\end{equation}

\noindent For this near-exactness property we recommend Proposition $2.9$ in \cite{Class} for a detailed account.

In a forthcoming paper \cite{Class2}, the non-unital situation is addressed. We shall reiterate the ideas in \cref{NonutailSection}. For now, unitary algebraic $K_1$ is functorial in the unital case: given a unital $^\ast$-homomorphism $\varphi \colon A \longrightarrow B$, there exists a homomorphism $\Ka(\varphi) \colon \Ka(A) \longrightarrow \Ka(B)$ such that
\begin{equation}\label{Thomsen}
\xymatrix
{
K_0(A) \ar[d]_-{K_0(\varphi)} \ar[r]^-{\rho_A} & \text{Aff }T(A) \ar[r]^-{\text{Th}_A} \ar[d]_-{\text{Aff}\, T(\varphi)}  & \Ka(A) \ar[r]^-{\minusa_A} \ar[d]_-{\Ka(\varphi)}   & K_1(A) \ar[d]_-{K_1(\varphi)} \\
K_0(B) \ar[r]^-{\rho_B} & \text{Aff}\, T(B) \ar[r]^-{\text{Th}_B}  & \Ka(B) \ar[r]^-{\minusa_B}   & K_1(B).
}
\end{equation}
commutes; see Proposition $2.10$ in \cite{Class}. This further entails that the Thomsen sequence (\ref{ThomsenExt1}) may be regarded as a functor in the variable $A$ with attached natural transformations $\rho$, $\mathrm{Th}$ and $\minusa$.

\subsection{One model of $\zeta^n$}
The $\zeta$-transformation will be introduced. The model exhibited here is found in section $3.1$ of \cite{Class}, and we point the reader there for details on all the involved claims. Let $A$ and $B$ be some unital C$^\ast$-algebras. For each piecewise smooth path $u \colon [0,1] \longrightarrow U_n(B)$, the \textit{de la Harpe -- Skandalis determinant} $\Delta_B(u)$ of $u$ is the element in $\text{Aff}\, T(B)$ given by
\[
\Delta_B(u)(\tau) = \frac{1}{2\pi i} \int_{[0,1]} \tau_n ( u'(t) u(t)^\ast) \ dt, \quad \tau \in T(B).
\]

\noindent Recall that $\tau_n$ refers to the unnormalized extension of $\tau$ to $\text{M}_n(B)$. The determinant defines a continuous group homomorphism from $U_\infty(C([0,1],B))$ into $\mathrm{Aff}T(B)$. Depending solely on the homotopy class, the map $u \mapsto \Delta_B(u)$ descends to a homomorphism $\det_B \colon U_\infty^0(B) \longrightarrow \text{Aff}\, T(B)/\overline{\text{im }\rho}$. The determinant was examined in \cite{Det} and moreover used to deduce the near-exactness \eqref{NearExact} property of the Thomsen sequence in \cite{Thomsen2}. 

To each $n\geq 2$, $(A\otimes \mathbb{I}_n)^\sim$ may be identified with the $\mathrm{C}^\ast$-algebra continuous functions $f\colon [0,1] \rightarrow A\otimes \mathbb{M}_n$ satisfying $f(0)\in A\otimes 1_{\mathbb{M}_n}$ and $f(1) \in \C 1_{A\otimes \Ma_n}$. Therefore, every $u\in U_\infty((A\otimes \mathbb{I}_n)^\sim)$ may be viewed as being an element of $U_\infty(C([0,1],A))$. Consequently, for each $n\geq 2$, one has a homomorphism $\widetilde{\zeta}_{A}^n \colon U_\infty((A\otimes \mathbb{I}_n)^\sim) \longrightarrow \Ka(A)$ via
\begin{equation}\label{zeta}
\widetilde{\zeta^n}(u) = \big [ \text{ev}_A^{(0,n)}(u) \big ]_\text{alg} - \big [ \text{ev}_A^{(1,n)} \big ]_\text{alg} + \text{Th}_A \big ( n^{-1} \Delta_{A}(u) \big ).
\end{equation}

Here $\text{ev}_A^{(i,n)}$ is evaluation at $i$ from $(A\otimes \mathbb{I}_n)^\sim$ into $A$ for $i=0,1$. This gives rise to a natural transformation $A \mapsto \zeta_A^n$ between the functors $\Ka$ and $K_\ast(\, \cdot \, , \Z/n\Z)$ such that $\nu_{0}^n = \minusa \circ \zeta^n$ for all $n\geq 1$. This is the \textit{$\zeta$-transformation}. The naturality of $\zeta^n$ stems from naturality of $\Delta$ and the evaluation maps.

The $\zeta^n$-tranformation from \cite{GLN:zeta} emerged from a rather different approach although with the same feature that $\nu_0^n = \minusa \circ \zeta^n$ for all $n\geq 1$. Our proof is a modification of their argument to include an abstract and unique characterization. However, in order to demonstrate this, we must extend the Thomsen sequence to the non-unital setting.

\subsection{Non-unital case}\label{NonutailSection}
Suppose $B$ is some non-unital C$^\ast$-algebra. Following the same ideas as when defining $K_1$-groups in the non-unital setting, to force split-exactness one reinvents the Hausdorffized unitary algebraic $K_1$-group as the kernel of the canonical character on the unitisation. As such, we define $\Ka(B)$ to be the kernel of canonical character $\pi_B \colon B^\sim \rightarrow \C$, so that it fits into the short exact sequence
\[
\xymatrix{
0 \ar[r] & \Ka(B) \ar[r]^-{\iota^\text{alg}}   & \Ka(B^\sim) \ar[rr]^-{\Ka(\pi_B)}  && \Ka(\C) \ar[r] & 0.
}
\]

\noindent We modify (\ref{ThomsenExt1}) to encompass all the non-unital C$^\ast$-algebras. Accordingly, let
\[
\text{Aff}_0\,  T_{\leq 1}(B) = \big \lbrace f \colon T_{\leq 1}(B) \longrightarrow \R :  \text{$f$ is affine, continuous and } f(0) = 0  \big \rbrace.
\]

\noindent Now, each $\tau \in T_{\leq 1}(B)$ canonically extends to a positive tracial functional $\tau^\sim$ on $B^\sim$ via 
\[
\tau^\sim(x + \lambda 1_{B^\sim}) = \tau(x) + \lambda, \quad x\in B, \ \lambda \in \C,
\]
which remains contractive. Then the natural maps
\begin{align*}
\eta \colon \ker \text{Aff} \, (\pi_B) \longrightarrow \text{Aff}_0 \, T_{\leq 1}(B), \quad \eta(f)(\tau) = f(\tau^\sim), \\
\mu \colon \text{Aff}_0 \, T_{\leq 1}(B) \longrightarrow \text{Aff }T(B^\sim), \quad \mu(f)(\tau) = f(\tau_{\vert B}),
\end{align*}
are affinely homeomorphic mutual inverses as one readily verifies via direct computations. Upon using the identification $\ker \text{Aff} \, (\pi_B) \simeq \text{Aff}_0\,  T_{\leq 1}(B)$ and (\ref{Thomsen}) from the unital case, we obtain the commutative diagram found below:
\begin{equation*}
\xymatrix{
& 0 \ar[d] &   0 \ar[d] &    0 \ar[d] &    0 \ar[d]  & \\
& K_0(B)   \ar[d]_-{\text{incl.}}  \ar@{.>}[r]^-{\rho_B}    & \text{Aff}_{0}\, T_{\leq 1}(B) \ar@{.>}[r]^-{\text{Th}_{B}} \ar[d]_-{\text{incl.}}    & \Ka(B) \ar@{.>}[r]^-{\minusa_B}   \ar[d]_-{\text{incl.}}   & K_1(B) \ar[d]_-{\text{incl.}}  \ar[r] & 0  \\
& K_0(B^\sim) \ar[d]_-{K_0(\pi_B)}  \ar[r]^-{\rho_{B^\sim}}  & \text{Aff }T(B^\sim) \ar[d]_-{ \text{Aff} \, (\pi_B)}  \ar[r]^-{\text{Th}_{B^\sim}}   & \Ka(B^\sim)  \ar[d]_-{\Ka(\pi_B)} \ar[r]^-{\minusa_{B^\sim}}  & K_1(B^\sim) \ar[d]_-{K_1(\pi_B)} \ar[r] & 0 \\
0\ar[r] & K_0(\C) \ar[r]^-{\rho_\C}  \ar[d]  &  \text{Aff }T(\C) \ar[r]^-{\text{Th}_\C} \ar[d]  & \Ka(\C) \ar[r]^-{\minusa_\C}  \ar[d] & K_1(\C) \ar[r] \ar[d] & 0   \\
& 0& 0 & 0 & 0 &
}
\end{equation*}
The dashed morphisms on the second row are defined to be restrictions of the third row ones. The middle row with unitisations satisfies that
\begin{equation}\label{diagram}
\ker \mathrm{Th}_{B^\sim} = \overline{\mathrm{im } \rho_{B^\sim}} \ \text{ and } \ 
\mathrm{im } \mathrm{Th}_{B^\sim} = \ker \minusa_{B^\sim},
\end{equation}

\noindent since $B^\sim$ is unital, see \cref{Traces}.

We now demonstrate that \eqref{diagram} does hold for $B$, meaning $\ker \minusa_B = \text{im }\text{Th}_B$ and $\overline{\text{im }\rho_B} = \ker \text{Th}_B$. The containment  $\overline{\mathrm{im } \rho_B} \subseteq \ker \mathrm{Th}_B$ is a direct consequence of continuity of the Thomsen map $\mathrm{Th}_{B^\sim}$ and a rudimentary computations shows that $\minusa_B \circ \mathrm{Th}_B = 0$. 

For the containment $\ker \mathrm{Th}_B \subseteq \overline{\mathrm{im } \rho_B}$, suppose $\mathrm{Th}_B(f) = 0$ holds for some $f\in \mathrm{Aff}_0 \, T_{\leq 1}(B)$. Regarded as an element in $\text{Aff} \, T(B^\sim)$, the property \eqref{diagram} of the middle row yields $f = \lim_n \rho_{B^\sim}(x_n)$ for a sequence $(x_n)_{n\geq 1} \subseteq K_0(B^\sim)$. Since $f\in \ker  \text{Aff} \, (\pi_B) \simeq \text{Aff}_0 \, T_{\leq 1}(B)$,
\[
\rho_\C \big ( \lim_n K_0(\pi_B)(x_n) \big ) = 
\lim_n  \text{Aff} \, (\pi_B) \rho_{B^\sim}(x_n) = 
 \text{Aff} \, (\pi_B) (f) = 0.
\]

\noindent However, $\rho_\C$ is injective, whereby $K_0(\pi_B)(x_n) =0$ for infinitely many $n\in \N$. Upon passing to a subsequence if necessary, we may write $f = \lim_n \rho_{B^\sim}(x_n)$ with $x_n \in \ker K_0(\pi_B) = K_0(B)$ for each $n\geq 1$. Thus, $f\in \overline{\mathrm{im } \rho_B}$.

For the containment $\ker \minusa_B \subseteq \mathrm{im } \mathrm{Th}_B$, suppose $x\in \ker \minusa_B$. If we now view $x$ as an element of $\Ka(B^\sim)$, then $\minusa_{B^\sim}(x) = 0$. From right-exactness of the middle row, it follows that $x = \mathrm{Th}_{B^\sim}(f)$ for an $f\in \mathrm{Aff} \, T(B^\sim)$. Put
\[
g = f -  \text{Aff} \, (\pi_B) \cdot 1,
\]

\noindent which does belong to $\mathrm{Aff} \, T(B^\sim)$ via $ \text{Aff} \, (\pi_B)(f) \in \mathrm{Aff} \, T(\C) \simeq \R$. In fact, we may infer that
\[
g = f -  \text{Aff} \, (\pi_B)(f) \cdot 1 \in \ker  \text{Aff} \, (\pi_B) = \mathrm{Aff}_0 \, T_{\leq 1}(B).
\]

\noindent Recall that for $h\in \mathrm{Aff}_0 \, T_{\leq 1}(B)$ one has $\mathrm{Th}_{B^\sim}(h) = [e^{2\pi i a}]_{\mathrm{alg}}$ with $a\in A_{\mathrm{sa}}$ satisfying $h = \mathrm{ev}_a$. In particular, we must have $\mathrm{Th}_{B^\sim}(1) = 0$, hence
\[
\mathrm{Th}_{B}(g) = \mathrm{Th}_{B^\sim}(g) 
                   = x - \big [ e^{2\pi i  \text{Aff} \, (\pi_B)(f)} \big ]_{\mathrm{alg}}
                   = x.
\]

\noindent This completes the proof.

\section{The Main Theorem}\label{Proof}

In this section, we demonstrate the existence and the uniqueness of the $\zeta$-invariant. The uniqueness portion is the new contribution. Nevertheless, we deduce existence for completeness. The argument hinges on the observation below, which furthermore carries the existence part of \cref{MainThm}.  It deviates from (\ref{zeta}) by giving an abstract picture of $\zeta$.

\begin{thm}\label{Existence}
For each integer $n\geq 2$, there exists a unique natural transformation
\[
\lambda^n \colon K_0(\, \cdot \, ; \Z/n\Z) \longrightarrow \Ka(\, \cdot \, \otimes \mathbb{I}_n)
\]
satisfying $\minusa_{A\otimes \mathbb{I}_n} \circ \lambda_A^n = \mathrm{id}_{K_1(A;\Z/n\Z)}$ for every $\mathrm{C}^\ast$-algebra $A$ and every $n\geq 2$. In particular, there exists a natural transformation
\[
\zeta_{}^n \colon K_0(\, \cdot \, ; \Z/n\Z) \longrightarrow \Ka
\]
 satisfying $\minusa_A \circ \zeta_A^n = \nu_A^n$ for each $n\geq 2$ and every $\mathrm{C}^\ast$-algebra $A$.
\end{thm}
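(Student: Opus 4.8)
The plan is to realize $\lambda^n$ as the unique splitting of $\minusa_{A\otimes\mathbb I_n}$ in the non-unital Thomsen sequence of \cref{Traces}, exploiting the tension between the real vector space $\mathrm{Aff}_0\,T_{\leq1}(A\otimes\mathbb I_n)$ (torsion-free) and the $K$-groups appearing here, which are $n$-torsion by \cref{Torsion}. First I would record two consequences of \cref{Torsion}. The domain of $\rho_{A\otimes\mathbb I_n}$ is $K_0(A\otimes\mathbb I_n)=K_1(A;\Z/n\Z)$, which is $n$-torsion, whereas its codomain $\mathrm{Aff}_0\,T_{\leq1}(A\otimes\mathbb I_n)$ is a real vector space and hence torsion-free; therefore $\rho_{A\otimes\mathbb I_n}=0$. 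By the near-exactness \eqref{diagram} (valid for all C$^\ast$-algebras, by the discussion in \cref{Traces}), $\ker\mathrm{Th}_{A\otimes\mathbb I_n}=\overline{\mathrm{im}\,\rho_{A\otimes\mathbb I_n}}=0$, so the Thomsen sequence of $A\otimes\mathbb I_n$ collapses to the short exact sequence
\[
0\longrightarrow \mathrm{Aff}_0\,T_{\leq1}(A\otimes\mathbb I_n)\xrightarrow{\ \mathrm{Th}_{A\otimes\mathbb I_n}\ }\Ka(A\otimes\mathbb I_n)\xrightarrow{\ \minusa_{A\otimes\mathbb I_n}\ }K_1(A\otimes\mathbb I_n)\longrightarrow 0 .
\]

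Next I would observe that this sequence is split, uniquely. A real vector space is a divisible, hence injective, abelian group, so $\mathrm{Ext}^1_{\Z}\!\big(K_1(A\otimes\mathbb I_n),\mathrm{Aff}_0\,T_{\leq1}(A\otimes\mathbb I_n)\big)=0$ and the sequence splits; choose a splitting $\lambda^n_A\colon K_1(A\otimes\mathbb I_n)=K_0(A;\Z/n\Z)\to\Ka(A\otimes\mathbb I_n)$ of $\minusa_{A\otimes\mathbb I_n}$ (this is, in essence, the unnatural splitting used in \cite{GLN:zeta}). Any two splittings differ by a homomorphism from $K_1(A\otimes\mathbb I_n)=K_0(A;\Z/n\Z)$ into $\ker\minusa_{A\otimes\mathbb I_n}\cong\mathrm{Aff}_0\,T_{\leq1}(A\otimes\mathbb I_n)$, i.e.\ from an $n$-torsion group to a torsion-free group; hence $\lambda^n_A$ is the \emph{unique} homomorphism with $\minusa_{A\otimes\mathbb I_n}\circ\lambda^n_A=\mathrm{id}$.

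This pointwise uniqueness upgrades to naturality with no further work. Given a $^\ast$-homomorphism $\varphi\colon A\to B$, both $\Ka(\varphi\otimes\mathrm{id}_{\mathbb I_n})\circ\lambda^n_A$ and $\lambda^n_B\circ K_0(\varphi;\Z/n\Z)$ are homomorphisms $K_0(A;\Z/n\Z)\to\Ka(B\otimes\mathbb I_n)$; using the naturality of $\minusa$ from \eqref{Thomsen} together with the identification $K_0(\varphi;\Z/n\Z)=K_1(\varphi\otimes\mathrm{id}_{\mathbb I_n})$, both agree after composing with $\minusa_{B\otimes\mathbb I_n}$. Their difference thus lands in $\ker\minusa_{B\otimes\mathbb I_n}\cong\mathrm{Aff}_0\,T_{\leq1}(B\otimes\mathbb I_n)$, a torsion-free group, while the source is $n$-torsion, so the difference is $0$ and the naturality square commutes. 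This establishes existence and uniqueness of $\lambda^n$. For the final statement I would then set $\zeta^n_A\coloneqq\Ka(\e^n_A)\circ\lambda^n_A\colon K_0(A;\Z/n\Z)\to\Ka(A)$, where $\e^n_A=\mathrm{id}_A\otimes\e^n_0$. Naturality of $\zeta^n$ is inherited from that of $\lambda^n$ and functoriality of $\Ka$, and applying naturality of $\minusa$ once more gives $\minusa_A\circ\zeta^n_A=K_1(\e^n_A)\circ\minusa_{A\otimes\mathbb I_n}\circ\lambda^n_A=K_1(\e^n_A)=\nu^n_{0,A}$, the last equality being the definition $\nu^n_{0,A}=K_1(\mathrm{id}_A\otimes\e^n_0)$.

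The single point the argument turns on — and the only place real input enters — is the observation that \cref{Torsion} forces $\mathrm{Aff}_0\,T_{\leq1}(A\otimes\mathbb I_n)$ and the relevant $K$-groups to be ``coprime'' in terms of torsion; this simultaneously yields $\rho_{A\otimes\mathbb I_n}=0$, the splitness of the resulting short exact Thomsen sequence, the uniqueness of the splitting, and its automatic naturality. The remaining effort is routine bookkeeping with the non-unital functoriality of $\Ka$, $\minusa$, and $\mathrm{Th}$ developed in \cref{Traces}, which I expect to be the only mildly delicate part of the write-up.
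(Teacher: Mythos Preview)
Your proposal is correct and follows essentially the same route as the paper: collapse the non-unital Thomsen sequence for $A\otimes\mathbb{I}_n$ via \cref{Torsion}, split it using injectivity of the real vector space $\mathrm{Aff}_0\,T_{\leq 1}(A\otimes\mathbb{I}_n)$, and use the torsion/torsion-free tension to force both uniqueness and naturality of the splitting, then post-compose with $\Ka(\e_A^n)$ to obtain $\zeta^n$. The only cosmetic differences are phrasing (you invoke $\mathrm{Ext}^1=0$ where the paper cites the splitting lemma) and your slightly more explicit observation that pointwise uniqueness already implies naturality.
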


\begin{proof}
Let $A$ be some C$^\ast$-algebra and set $B = A \otimes \mathbb{I}_n$. Firstly, we observe that $K_0(B)$ has $n$-torsion via \cref{Torsion} while $\mathrm{Aff}_0 \, T_{\leq 1}(B)$ is a vector space, hence is torsion-free. Therefore, the pairing map $\rho_B \colon K_0(B) \longrightarrow \mathrm{Aff}_0 \, T_{\leq 1}(B)$ vanishes. Consequently, this in conjunction with \eqref{diagram}, having $B^\sim$ replaced by $B$, entails that Thomsen sequence collapses to the extension
\begin{equation}\label{finalseq}
\xymatrix{
0 \ar[r]  &  \mathrm{Aff}_0 \, T_{\leq 1}(B) \ar[r]^-{\mathrm{Th}_B}  & \Ka(B) \ar[r]^-{\minusa_B}  & K_1(B) \ar[r] & 0.
}
\end{equation}

\noindent Being a vector space, $\mathrm{Aff}_0 \, T_{\leq 1}(B)$ is injective as a module. Injective modules enable the use of the standard splitting lemma. Therefore, we obtain some splitting $\lambda_B \colon K_1(B) \longrightarrow \Ka(B)$ for $\minusa_B$. For uniqueness, suppose $\lambda_B'$ were another splitting for $\minusa_B$. Set $\delta \coloneqq \lambda_B - \lambda_B'$. By construction, $\minusa_B$ vanishes on the image of $\delta$, so the image of $\delta$ falls into $\ker \minusa_B = \mathrm{Im}\, \mathrm{Th}_B \simeq \mathrm{Aff}_0 \, T_{\leq 1}(B)$. As such, there exists a (unique) homomorphism $\theta_B \colon K_1(B) \longrightarrow \mathrm{Aff}_0 \, T_{\leq 1}(B)$ for which $\delta = \theta_B \circ \minusa_B$. Since the codomain is torsion-free whereas the domain has $n$-torsion, $\theta_B = 0$. It follows that $\delta=0$ and thus $\lambda_B=\lambda_{B'}$ as required. This proves unqiueness of the splitting.


Naturality may be obtained in a similar manner. Indeed, let $\varphi \colon D \longrightarrow E$ be a $^\ast$-homomorphism between C$^\ast$-algebras. The induced map $\varphi \otimes \text{id}_{\mathbb{I}_n}$ gives rise to a group homomorphism $\pi_\varphi \colon K_0(D;\Z/n\Z) \rightarrow \Ka(E)$ by
\[
\pi_\varphi \coloneqq \lambda_E \circ K_1(\varphi\otimes \mathrm{id}_{\mathbb{I}_n}) - \Ka(\varphi \otimes \text{id}_{\mathbb{I}_n}) \circ \lambda_D.
\]
Now, $D\mapsto \lambda_D$ is natural if $\pi_\varphi = 0$ for each such $\varphi$. By naturality of $A\mapsto \minusa_{A}$,
\begin{align*}
\minusa_E \circ \pi_\varphi &= K_1(\varphi \otimes \mathrm{id}_{\mathbb{I}_n}) - \minusa_E \circ \Ka(\varphi \otimes \mathrm{id}) \circ \lambda_D  \\
&=  K_1(\varphi \otimes \mathrm{id}_{\mathbb{I}_n}) - K_1(\varphi \otimes \mathrm{id}_{\mathbb{I}_n}) \circ \minusa_D \circ \lambda_D = 0.
\end{align*}

Thus, $\pi_\varphi$ factors through the torsion-free group $\mathrm{Aff}_0 \, T_{\leq 1}(E)$. With exactly the same argument as in the preceding paragraph, one arrives at $\pi_\varphi = 0$ due to $K_1(D\otimes \mathbb{I}_n)$ having $n$-torsion via \cref{Torsion}.

For the existence of $\zeta$, let $A$ be any C$^\ast$-algebra and fix some $n\geq 2$. Define accordingly a group homomorphism by
\[
\zeta_A^n \coloneqq \Ka(\e_A^n) \circ \lambda_{A\otimes \mathbb{I}_n} \colon K_0(A;\Z/n\Z) \longrightarrow \Ka(A).
\]

\noindent Due to $A\mapsto \Ka(\e_{A}^n)$ and $A\mapsto \lambda_{A}$ being natural, the assignment $A \mapsto \zeta_A^n$ is natural for each integer $n\geq 2$. Lastly, since $A\mapsto \minusa_{A}$ is natural,
\[
\minusa_A \circ \zeta_A^n = 
\minusa_A \circ \Ka(\e_A^n) \circ \lambda_{A\otimes \mathbb{I}_n} = 
K_1(\e_A^n) \circ \minusa_{A\otimes \mathbb{I}_n} \circ \lambda_{A\otimes \mathbb{I}_n} = 
K_1(\e_A^n) = \nu_A^n
\]
 holds for each integer $n\geq 2$ and C$^\ast$-algebra $A$.
\end{proof}

We proceed to deriving \cref{MainThm} by supplying the missing uniqueness aspect of  the preceding theorem. The proof reuses arguments from the existence proof with modifications to pass from the torsion case to the general setting.

\begin{thm}
Let $n\geq 2$ be any integer. If $\y^n, \theta^n \colon K_0(\, \cdot \, ; \Z/n\Z) \longrightarrow \Ka$ are natural transformations such that $\minusa_A \circ \y_A^n = \minusa_A \circ \theta_A^n$ holds for each $\mathrm{C}^\ast$-algebras $A$, then $\y^n = \theta^n$. Moreover, such a natural transformation exists.
\end{thm}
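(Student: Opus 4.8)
The plan is to set $\delta^n := \y^n - \theta^n$, which is a natural transformation $K_0(\,\cdot\,;\Z/n\Z)\to\Ka$ with $\minusa_A\circ\delta_A^n=0$ for every C$^\ast$-algebra $A$, and to prove that $\delta^n=0$. The argument runs parallel to the proof of \cref{Existence}: first I would dispose of the algebras of the special form $A\otimes\mathbb{I}_n$, where the Thomsen sequence degenerates to \eqref{finalseq}, and then bootstrap to arbitrary C$^\ast$-algebras via the surjectivity statement of \cref{Surjective}. Note that $\delta^n$ is automatically natural, being a difference of two natural transformations with abelian-group-valued target.

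For the first step, fix a C$^\ast$-algebra $A$ and put $B=A\otimes\mathbb{I}_n$. As in the proof of \cref{Existence}, $K_0(B)=K_1(A;\Z/n\Z)$ is annihilated by $n$ by \cref{Torsion}, while $\mathrm{Aff}_0\,T_{\leq 1}(B)$ is a vector space; hence $\rho_B=0$, so by \eqref{diagram} the map $\mathrm{Th}_B$ is injective and $\ker\minusa_B=\mathrm{im}\,\mathrm{Th}_B\cong\mathrm{Aff}_0\,T_{\leq 1}(B)$ is torsion-free. Since $\minusa_B\circ\delta_B^n=0$, the homomorphism $\delta_B^n$ takes values in $\ker\minusa_B$, so $\mathrm{Th}_B^{-1}\circ\delta_B^n$ is a homomorphism from $K_0(B;\Z/n\Z)$ into the torsion-free group $\mathrm{Aff}_0\,T_{\leq 1}(B)$; but $K_0(B;\Z/n\Z)$ is annihilated by $n$ by \cref{Torsion}, so this homomorphism vanishes and therefore $\delta_B^n=0$.

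For the second step, let $A$ be arbitrary and apply naturality of $\delta^n$ to the evaluation $^\ast$-homomorphism $\e_A^n\colon A\otimes\mathbb{I}_n\to A$. This yields
\[
\delta_A^n\circ K_0(\e_A^n;\Z/n\Z)=\Ka(\e_A^n)\circ\delta_{A\otimes\mathbb{I}_n}^n,
\]
whose right-hand side vanishes by the first step. Because $K_0(\e_A^n;\Z/n\Z)\colon K_0(A\otimes\mathbb{I}_n;\Z/n\Z)\to K_0(A;\Z/n\Z)$ is surjective by \cref{Surjective}, it follows that $\delta_A^n=0$, i.e. $\y^n=\theta^n$. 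Combined with \cref{Existence}, this yields the theorem stated in the introduction: $\zeta^n$ is the unique natural transformation with $\minusa\circ\zeta^n=\nu_0^n$.

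I do not expect a genuine obstacle; the proof is short and is essentially a refinement of the existence argument. The points requiring care are that the degenerate Thomsen sequence and the identification $\ker\minusa_B\cong\mathrm{Aff}_0\,T_{\leq 1}(B)$ must be invoked for $B=A\otimes\mathbb{I}_n$ itself rather than for its unitisation, and that \cref{Torsion} must be applied twice for $A\otimes\mathbb{I}_n$: once to see $K_0(A\otimes\mathbb{I}_n)$ is annihilated by $n$ (forcing $\rho_{A\otimes\mathbb{I}_n}=0$), and once to see $K_0(A\otimes\mathbb{I}_n;\Z/n\Z)$ is annihilated by $n$. The only conceptual difference from \cref{Existence} is that $\delta^n$ need not be a section of $\minusa$, yet the torsion/torsion-free dichotomy still forces it to vanish.
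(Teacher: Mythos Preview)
Your proof is correct and follows essentially the same route as the paper's: reduce first to $B=A\otimes\mathbb{I}_n$ via the torsion/torsion-free dichotomy in the degenerate Thomsen sequence, then bootstrap to arbitrary $A$ using naturality with respect to $\e_A^n$ and the surjectivity of $K_0(\e_A^n;\Z/n\Z)$ from \cref{Surjective}. The only cosmetic difference is that you package the argument in terms of the single natural transformation $\delta^n=\y^n-\theta^n$ rather than tracking $\y^n$ and $\theta^n$ separately, which is a slight streamlining.
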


\begin{proof}
We initially reapply the uniqueness argument for the splitting found in \cref{Existence}. Let $A$ be a C$^\ast$-algebra and set $B = A\otimes \mathbb{I}_n$. Due to \cref{Torsion}, the pairing map $\rho_B \colon K_0(B) \longrightarrow \mathrm{Aff}_0 \, T_{\leq 1}(B)$ vanishes. As such, the Thomsen sequence collapses to \eqref{finalseq}. By the hypothesis on $\y^n$ and $\theta^n$, the attached error $\delta \coloneqq \y_B^n - \theta_B^n$ has its image contained in $\ker \minusa_B \simeq \mathrm{Aff}_0 \, T_{\leq 1}(B)$. Thus, $\delta = \mathrm{Th}_B \circ \theta$ for a homomorphism $\theta \colon K_0(B;\Z/n\Z) \longrightarrow \mathrm{Aff}_0 \, T_{\leq 1}(B)$. Since the domain of $\theta$ has $n$-torsion whereas its codomain is torsion-free, it must vanish and so $\delta=0$. In total, one must have $\y_{A\otimes \mathbb{I}_n}^n = \theta_{A\otimes \mathbb{I}_n}^n$.

Upon appealing to naturality of the involved transformations with respect to the evaluation map $\e_A^n \colon A\otimes \mathbb{I}_n \rightarrow A$, one obtains a commutative square
\[
\xymatrix{
K_0(A\otimes \mathbb{I}_n; \Z/n\Z) \ar[rr]^-{K_0(\e_A^n;\Z/n\Z)} \ar[d]_-{\y_{A\otimes \mathbb{I}_n}^n = \theta_{A\otimes \mathbb{I}_n}^n}  && K_0(A;\Z/n\Z) \ar[d]^-{\y_A^n} \\
\Ka(A\otimes \mathbb{I}_n) \ar[rr]_-{\Ka(\e_A^n)}  && \Ka(A)
}
\]
There is another similar diagram with the right-vertical morphism being $\theta_A^n$ instead of $\y_A^n$. Subsequently, we arrive at
\[
\theta_A^n \circ K_0(\e_A^n; \Z/n\Z) = \y_A^n \circ K_0(\e_A^n;\Z/n\Z).
\]
Due to \cref{Surjective}, $K_0(\e_A^n;\Z/n\Z)$ is surjective, hence has a right inverse. It follows that $\theta_A^n = \y_A^n$. The existence part is handled in \cref{Existence}.
\end{proof}

The theorem does not rely on the model chosen for $K_\ast(\, \cdot \, ; \Z/n\Z)$. In fact, the proof solely used the conclusion of \cref{Torsion} and \cref{Surjective}, each of which may be deduced without appealing to our selected picture of $K_\ast(\, \cdot \, ; \Z/n\Z)$.  Ergo, any natural transformation satisfying the conditions of the theorem must agree with the one of (\ref{zeta}) on page $6$.

\bibliographystyle{plain}
\bibliography{zeta}

\end{document}